\newtheorem{thm}{Theorem}[section]
\newtheorem{cor}[thm]{Corollary}
\theoremstyle{definition}
\newtheorem{defn}[thm]{Definition}
\newtheorem{rem}[thm]{Remark}
\newtheorem{exam}[thm]{Example}
\numberwithin{equation}{section}
\begin{document}
\title[A generalization of \'Ciri\'c fixed point theorems]{A generalization of \'Ciri\'c fixed point theorems}

\author{Nguyen Van Dung}
\address[]{Faculty of Mathematics and Information Technology Teacher Education, Dong Thap University, Cao Lanh, Dong Thap 871200, Viet Nam}
\email{nvdung@dthu.edu.vn, nguyendungtc@yahoo.com}

\author{Poom Kumam}
\address[]{Department of Mathematics, Faculty of Science, King Mongkut's University of Technology Thonburi (KMUTT),
Bang Mod, Thrung Khru, Bangkok 10140, Thailand}
\email{poom.kum@kmutt.ac.th}

\author{Kanokwan Sitthithakerngkiet}
\address[]{Department of Mathematics, Faculty of Applied Science, King Mongkut's University of North Bangkok (KMUTNB), Wongsawang, Bangsue, Bangkok 10800, Thailand}
\email{kanokwans@kmutnb.ac.th}

\subjclass[2000]{Primary 47H10, 54H25; Secondary 54D99, 54E99}%
\keywords{\'Ciri\'c fixed point, metric space}

\begin{abstract}

In this paper, we state and prove  a generalization of \'Ciri\'c fixed point theorems in metric space by using a new generalized quasi-contractive map. These theorems extend other well known fundamental metrical fixed point theorems in the literature (Banach \cite{SB1922}, Kannan \cite{RK1968}, Nadler \cite{SBN1969}, Reich \cite{SR1972}, etc.) Moreover, a multi-valued version for generalized quasi-contraction is also established.

\end{abstract}

\maketitle

\section{Introduction}

The  Banach's contraction principle \cite{SB1922} which was first appeared in 1922 is one of the most useful and important theorems in classical functional analysis. Its utility is not only to prove that,  in a complete metric space $X$, the contraction map $T$ (i.e., $d(Tx,Ty) \leq \alpha d(x,y)$ for some $0 \leq \alpha <1$ and for all $x,y \in X$)
has a unique fixed point  but also to show that the Picard iteration  converges to the fixed point.  For the reason that the contraction must be continuous, there are many researchers establish the fixed point theorems on various classes of  operators that are weaker than contractive conditions but are not continuous, see for example \cite{RK1968, SR1972}.

One of the most well-known results in generalizations of Banach's contraction principle which the Picard iteration still converges to the fixed point of map  is the \'Ciri\'c fixed point theorem \cite{LBC1974}. Before providing the \'Ciri\'c fixed point theorem, we recall that a  self-map  $T$ on a metric space $(X,d)$,    is said to be a \emph{quasi-contraction}  iff there exists a nonnegative number $q <1$ such that for all $x,y \in X$,
\begin{eqnarray} \label{139-64}
d(Tx,Ty) &\le& q   \max \big\{d(x,y), d(x,Tx), d(y,Ty), d(x,Ty), d(y,Tx) \big\}.
\end{eqnarray}
The \'Ciri\'c fixed point theorem is given by the following theorem.
\begin{thm} [\cite{LBC1974}, Theorem 1] \label{139-65}
Let the metric space $X$ be $T$-orbitally complete and let $T$ be a quasi-contraction.
Then we have
\begin{enumerate}
\item $T$ has a unique fixed point $ x ^*$ in $X$.

\item $\lim\limits_{n \rightarrow \infty} T ^nx = x ^*$ for all $x \in X$.

\item $ d( T ^ nx,x ^* ) \le \cfrac{ q ^ n}{1 - q} d(x,Tx) $ for all $x \in X$.
\end{enumerate}
\end{thm}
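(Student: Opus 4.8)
The plan is to run \'Ciri\'c's original orbit-diameter argument. Fix $x \in X$. For $n \geq 1$ set $O(x;n) = \{x, Tx, \dots, T^n x\}$ and $O(x) = \{T^k x : k \geq 0\}$, and write $\delta(A) = \sup\{d(a,b) : a,b \in A\}$ for the diameter of a set $A \subseteq X$. The first step is the elementary observation that, applying \eqref{139-64} to the pair $T^{i-1}x, T^{j-1}x$, one gets $d(T^i x, T^j x) \leq q\,\delta(O(x;n))$ whenever $1 \leq i, j \leq n$, since each of the five points in the maximum lies in $O(x;n)$; letting $n$ vary, the same estimate with $\delta(O(x))$ in place of $\delta(O(x;n))$ holds for all $i, j \geq 1$.

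The crucial step is to prove $\delta(O(x;n)) \leq \frac{1}{1-q}\, d(x, Tx)$ for every $n$. Because $O(x;n)$ is finite, its diameter equals $d(T^i x, T^j x)$ for some $0 \leq i, j \leq n$; if both $i, j \geq 1$, the first step gives $\delta(O(x;n)) \leq q\,\delta(O(x;n))$ and hence $\delta(O(x;n)) = 0$, so we may assume $i = 0$. Then
\[
\delta(O(x;n)) = d(x, T^j x) \leq d(x, Tx) + d(Tx, T^j x) \leq d(x, Tx) + q\,\delta(O(x;n)),
\]
and solving for $\delta(O(x;n))$ yields the claim; consequently $\delta(O(x)) \leq \frac{1}{1-q}\, d(x, Tx) < \infty$.

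Next I would get geometric decay of the tail orbits: since $\delta(O(Ty)) \leq q\,\delta(O(y))$ for every $y$ (again by the first step), iteration gives $\delta(O(T^n x)) \leq q^n\,\delta(O(x))$, whence
\[
d(T^n x, T^{n+m} x) \leq \delta(O(T^n x)) \leq q^n\,\delta(O(x)) \leq \frac{q^n}{1-q}\, d(x, Tx)
\]
for all $n, m \geq 0$. As $q^n \to 0$, the sequence $(T^n x)$ is Cauchy, so by $T$-orbital completeness it converges to some $x^* \in X$. To show $Tx^* = x^*$ without assuming $T$ continuous, I would bound $d(x^*, Tx^*) \leq d(x^*, T^{n+1}x) + d(T^{n+1}x, Tx^*)$ and apply \eqref{139-64} to the pair $T^n x, x^*$ in the last term; letting $n \to \infty$, every quantity on the right-hand side tends to $0$ except for a contribution bounded by $q\,d(x^*, Tx^*)$, so $(1-q)\,d(x^*, Tx^*) \leq 0$. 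Uniqueness follows at once by applying \eqref{139-64} to two fixed points, and letting $m \to \infty$ in the displayed inequality (using continuity of $d$) gives $d(T^n x, x^*) \leq \frac{q^n}{1-q}\, d(x, Tx)$, which is assertion (3) and a fortiori (2); since this limit is the unique fixed point, it does not depend on the starting point $x$.

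The main obstacle is the second step: the trick that the diameter of a finite orbit segment is always attained at a pair involving the starting point $x$ is the one genuinely nonroutine ingredient, and without the resulting a priori bound $\delta(O(x)) < \infty$ the Cauchy estimate never gets off the ground. Everything after that is standard Cauchy-sequence and $\limsup$ bookkeeping.
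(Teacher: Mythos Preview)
Your proof is correct and follows essentially the same orbit-diameter argument that the paper uses (for its generalized version, Theorem~\ref{139-99}; the paper does not give a separate proof of Theorem~\ref{139-65} itself, which is cited from \cite{LBC1974}). The only cosmetic difference is that you iterate via the clean inequality $\delta(O(Ty))\le q\,\delta(O(y))$ on full orbits, whereas the paper tracks finite orbit segments $O_T(T^{n-1}x,m-n+1)$ and peels off one factor of $q$ at a time; the content is identical.
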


This result was generalized to many results, such as a common fixed point theorem of nonlinear contraction~\cite[Theorem~4]{SS1982}, a generalized $\varphi$-contraction \cite[Section~2.6]{VB2006}, a \'Ciri\'c almost contraction \cite[Theorem~3.2]{VB2008} and see also \cite{LBC2009, JS2009, LC2009}. But from the well-known result of Rhoades \cite{BER1977} in 1977 to recent surveys, in Berinde \cite{VB2006} and Collaco and Silva \cite{CS1997} for instance, there were no any other value added to quasi-contraction condition. On the other hand, the  Banach's contraction principle has been extended to multi-valued contractions by Nadler \cite{SBN1969} and see also \cite{DK1995, EAV2005, AAH2011, PVS2002}.

In this paper, we define a new generalized quasi-contraction by adding four new values $d(T ^2 x,x)$,  $d(T ^2 x,Tx)$, $d(T ^2 x,y)$, $d(T ^2 x,Ty)$ to a quasi-contraction condition. Also, an example is presented. After that we state and prove unique fixed point theorems which are the generalization of \'Ciri\'c fixed point theorem in~\cite{LBC1974}. Moreover, we also establish fixed point theorems for multi-valued  generalized quasi-contraction.

\section{Preliminaries}
First, we recall some notions which will be used in what follows. Let $(X,d)$ be a metric space and $A,B$ be any  two subsets of $X$. We denote
\begin{eqnarray*}  D(A,B) &=& \inf \big\{ d(a,b): a \in A, b\in B \big\}\\
\rho(A,B) & =& \sup \big\{d(a,b): a \in A, b\in B \big\}\\
BN(X) &=& \big\{A: \emptyset \neq A \subset X \text{ and } \delta (A) < +\infty  \big\},
\end{eqnarray*}
where $\delta(A) := \sup \big\{d(a,b): a,b \in A\big\}$.

\begin{defn}[\cite{LBC1974}] Let $T: X \longrightarrow X$ be a map on metric space.
For each $x \in X$ and for any positive integer $n$, put  $$ O_T(x,n) = \{x, Tx, \ldots, T ^ nx \}~and~  O_T(x,+\infty) = \{x, Tx, \ldots, T ^ nx, \ldots \}.$$
  The set $O_T(x,+\infty)$ is called  the \emph{orbit} of $T$ at $x$ and the metric space
$X$ is called \emph{$T$-orbitally complete} if every Cauchy sequence in $O_T(x,+\infty)$ is convergent in $X$.
\end{defn}

Note that every complete metric space is $T$-orbitally complete for all maps $T: X \longrightarrow X$.
The following example shows that there exists a $T$-orbitally complete metric space but it is not complete.

\begin{exam} Let $(X,d)$ be a metric space which is not complete and $T: X \longrightarrow X$ be the map defined by $Tx = x_0$ for all $x \in X$ and some $x_0 \in X$. Then $(X,d)$ is a $T$-orbitally complete metric space which is not complete.
\end{exam}

\begin{defn}[\cite{LBC1974}] \label{19-94}
Let $F: X \longrightarrow BN(X)$ be a multi-valued mapping. Let $x_0 \in X$, an \emph{orbit} of $F$ at $x_0$ is a sequence $$\big\{x_n: x_n \in F x_{n-1}, n \in \mathbb{N}\big\} .$$ A space $X$ is called to be \emph{$F$-orbitally complete} if every Cauchy sequence which is a subsequence of an orbit of $F$ at $x$ for some $x \in X$, converges in $X$.
\end{defn}

Next, the definitions of generalized quasi-contraction for single-valued and multi-valued are given as follows;

\begin{defn} \label{sgqc}
Let $T: X \longrightarrow X$ be a mapping on metric space $X$. The mapping $T$ is said to be a \emph{generalized quasi-contraction}  iff   there exists $q \in [0,1)$ such that for all $x, y \in X$,
\begin{eqnarray} \label{139-64}
d(Tx,Ty) &\le& q   \max \big\{d(x,y), d(x,Tx), d(y,Ty), d(x,Ty), d(y,Tx),\\ \nonumber
&&d(T ^2 x,x), d(T ^2 x,Tx) ,d(T ^2 x,y), d(T ^2x, Ty) \big\}.
\end{eqnarray}
\end{defn}

\begin{exam} \label{ex} Let $X = \{1,2,3,4,5 \} $ with $d$ defined as
$$ d(x,y) = \left\{\begin{array}{ll}0 &\hbox{ if } x = y\\ 2 &\hbox{ if } (x,y) \in \big\{(1,4), (1,5), (4,1), (5,1) \big \}\\
1 & \hbox{ otherwise.} \end{array}\right. $$
Let $T: X \longrightarrow X$ be defined by $$ T1 = T2 =T3 =1, T4 = 2, T5 =3.$$
Then, we have
$$d(Tx,Ty) = d(1,1) = 0 ~\text{ if } ~ x,y \in \{ 1,2,3\}; $$
$$d(T1,T4) =d(T2,T4) = d(T3,T4) = d(1,2) =1;$$
$$ d(T1,4) = d(T2,4) = d(T3,4) = d(1,4) = 2;$$
$$d(T1,T5) = d(T2,T5) = d(T3,T5) = d(1,3) = 1;$$
$$ d(T1,5) = d(T2,5) = d(T3,5) = d(1,5) =2;$$
$$d(T4,T5) = d(2,3) =1;$$
$$d(4,5) = d(4,T4) =d(5,T5) = d(4,T5) =d(5,T4) =1;$$
$$d(T ^2 4,4) = d(T2,4) = d(1,4) =2;$$
$$d(T ^2 5,5) = d(T3,5) = d(1,5) =2.$$

The above calculations show that $T$ is not quasi-contraction for $x =4$ and $y=5$ because there is no a nonnegative number $q<1$ satisfying the equation \eqref{139-64}.  However, $T$ is generalized quasi-contraction since the \eqref{139-64} holds for some $q \in \big[0.5,1 \big)$ and  for all $x,y \in X$.
\end{exam}

\section{The main results}
On the following results, we state and prove the new fixed point theorems which are general cases of the \'Ciri\'c fixed point theorem.

\begin{thm} \label{139-99} Let $(X,d)$ be a metric space. Suppose that $T: X \longrightarrow X$ is a generalized quasi-contraction and $X$ is $T$-orbitally complete.
Then we have
\begin{enumerate} \item \label{139-99-3} $T$ has a unique fixed point $ x ^*$ in $X$.

\item \label{139-99-4} $\lim\limits_{n \rightarrow \infty} T ^nx = x ^*$ for all $x \in X$.

\item \label{139-99-5} $ d( T ^ nx,x ^* ) \le \cfrac{ q ^ n}{1 - q} d(x,Tx)$ for all $x \in X$ and $n \in \mathbb{N}$.
\end{enumerate}
\end{thm}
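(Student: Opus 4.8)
The plan is to follow the classical \'Ciri\'c argument through orbit diameters, adding one device that absorbs the four new terms $d(T^2x,x)$, $d(T^2x,Tx)$, $d(T^2x,y)$, $d(T^2x,Ty)$. Fix $x\in X$; throughout we use $O_T(x,n)=\{x,Tx,\dots,T^nx\}$, its diameter $\delta\bigl(O_T(x,n)\bigr)$, and the shifted orbits $O_T(T^\ell x,n)$. The crucial first step is the estimate
\begin{equation*}
d(T^ix,T^jx)\le q\,\delta\bigl(O_T(x,n)\bigr)\qquad\text{for all }1\le i,j\le n,
\end{equation*}
together with its exact analogue for every shifted orbit. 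To prove it one may assume $i<j$ (so $i\le n-1$), write $d(T^ix,T^jx)=d\bigl(T(T^{i-1}x),T(T^{j-1}x)\bigr)$, and apply the defining inequality of a generalized quasi-contraction with the \emph{smaller-index} iterate $T^{i-1}x$ in the role of the first variable: then $T^2(T^{i-1}x)=T^{i+1}x$ with $i+1\le n$, so all nine distances on the right-hand side join points of $O_T(x,n)$ and are therefore $\le\delta\bigl(O_T(x,n)\bigr)$. This asymmetric choice — permissible because $d$ is symmetric, so the defining inequality may be read with its two variables interchanged — is what renders the new terms harmless, and it is the one genuinely new ingredient; the rest is a reworking of \'Ciri\'c's proof.

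From this estimate the argument proceeds along standard lines. If $\delta\bigl(O_T(x,n)\bigr)>0$, the diameter cannot be attained by a pair of indices both $\ge 1$ (that would force $\delta\le q\delta$), so $\delta\bigl(O_T(x,n)\bigr)=d(x,T^kx)$ for some $1\le k\le n$; hence
\begin{equation*}
\delta\bigl(O_T(x,n)\bigr)=d(x,T^kx)\le d(x,Tx)+d(Tx,T^kx)\le d(x,Tx)+q\,\delta\bigl(O_T(x,n)\bigr),
\end{equation*}
giving the uniform bound $\delta\bigl(O_T(x,n)\bigr)\le\frac{1}{1-q}\,d(x,Tx)$ for every $n$. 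For $m>n\ge 1$ I would then iterate down the orbit: $d(T^nx,T^mx)\le q\,\delta\bigl(O_T(T^{n-1}x,m-n+1)\bigr)$, the diameter on the right equals $d\bigl(T^{n-1}x,T^jx\bigr)$ for some $j\ge n$, which is $\le q\,\delta\bigl(O_T(T^{n-2}x,m-n+2)\bigr)$, and so on, so that after $n$ steps
\begin{equation*}
d(T^nx,T^mx)\le q^n\,\delta\bigl(O_T(x,m)\bigr)\le\frac{q^n}{1-q}\,d(x,Tx).
\end{equation*}
Since $q<1$, $\{T^nx\}$ is Cauchy; as it lies in $O_T(x,+\infty)$, $T$-orbital completeness supplies a limit $x^*\in X$.

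It remains to identify $x^*$. Put $L=d(x^*,Tx^*)$ and apply the defining inequality to the pair $(T^nx,x^*)$; since $T^nx\to x^*$ and $d$ is continuous, each of the nine quantities bounding $d(T^{n+1}x,Tx^*)$ tends to $0$ or to $L$, so letting $n\to\infty$ gives $L\le qL$, forcing $L=0$, i.e. $Tx^*=x^*$. Uniqueness is then immediate: if also $Ty^*=y^*$, then $T^2x^*=x^*$, so the right-hand side of the defining inequality for $(x^*,y^*)$ collapses to $d(x^*,y^*)$, forcing $d(x^*,y^*)\le q\,d(x^*,y^*)$ and hence $x^*=y^*$. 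For an arbitrary starting point the same Cauchy estimate yields a limit which, being a fixed point, must equal $x^*$; this proves (2), and letting $m\to\infty$ in the last displayed inequality proves (3). The main obstacle is the very first step — arranging the invocation of the contraction condition so that the $d(T^2x,\cdot)$ terms remain inside the relevant orbit segment — after which everything is classical.
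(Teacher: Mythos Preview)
Your proposal is correct and follows essentially the same route as the paper's own proof: the orbit-diameter estimate $d(T^ix,T^jx)\le q\,\delta\bigl(O_T(x,n)\bigr)$ obtained by placing the \emph{smaller} iterate in the first slot so that $T^{i+1}x$ stays in $O_T(x,n)$, then the bound $\delta\bigl(O_T(x,n)\bigr)\le\frac{1}{1-q}d(x,Tx)$, the descending iteration through shifted orbits to get $d(T^nx,T^mx)\le\frac{q^n}{1-q}d(x,Tx)$, and the limiting arguments for existence, uniqueness, and the error estimate. Your explicit remark that symmetry of $d$ is what licenses always putting the smaller index first is in fact a clarification the paper leaves implicit.
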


\begin{proof}
\eqref{139-99-3}. \textbf{Step 1.} \emph{$T$ has a fixed point.} For each $x \in X$ and $1 \le i \le n-1$ and $ 1\le j \le n$, we have
\begin{eqnarray} \label{139-97} &&d(T ^ix,T ^ jx) \\ \nonumber
&=& d(T T ^{ i-1}x,T T ^{ j-1}x)\\ \nonumber
&\le &
q  \max \big\{
d(T ^{i-1}x,T ^{j-1}x), d(T ^{i-1}x,TT ^{i-1}x), d(T ^{j-1}x,TT ^{j-1}x), d(T ^{i-1}x,TT ^{j-1}x),\\ \nonumber
&&
d(T ^{j-1}x,TT ^{i-1}x),
d(T ^2 T ^{i-1}x,T ^{i-1}x), d(T ^2 T ^{i-1}x,TT ^{i-1}x), d(T ^2 T ^{i-1}x,T ^{j-1}x)\\ \nonumber
&&
 d(T ^2 T ^{i-1}x,TT ^{j-1}x) \big\}\\ \nonumber
&= &
q  \max \big\{
d(T ^{i-1}x,T ^{j-1}x), d(T ^{i-1}x,T ^{i}x), d(T ^{j-1}x,T ^{j}x), d(T ^{i-1}x,T ^{j}x),\\ \nonumber
&&
d(T ^{j-1}x,T ^{i}x),
d(T ^{i+1}x,T ^{i-1}x), d(T ^{i+1}x,T ^{i}x), d(T ^{i+1}x,T ^{j-1}x)\\ \nonumber
&&
 d(T ^{i+1}x,T ^{j}x) \big\}\\ \nonumber
&\le&
q  \delta \big[O_T(x,n)\big]
\end{eqnarray} where $\delta \big[O_T(x,n)\big] = \max \big\{d(T ^ ix,T ^ jx): 0 \le i,j \le n \big\} $.

From~\eqref{139-97}, since $0 \le q <1$, there exists $k_n(x) \le n$ such that
\begin{equation} \label{139-96} d(x,T ^{k_n(x)}x) = \delta \big[O_T(x,n) \big].
\end{equation}

Then we have
\begin{eqnarray*} d(x, T ^{k_n(x)}x) & \le & d(x, Tx) + d(Tx, T ^{k_n(x)}x)\\ \nonumber
&\le &
d(x,Tx) + q  \delta \big[O_T(x,n) \big] \\ \nonumber
&=&
d(x,Tx) + q  d(x,T ^{ k_n(x)}x).
\end{eqnarray*}

It implies that
\begin{equation} \label{139-95} \delta \big[O_T(x,n) \big] = d(x,T ^{ k_n(x)}x) \le \cfrac{ 1}{1 -q}d(x,Tx).
\end{equation}

For all $n,m \le 1$ and $ n <m$, it follows from the generalized quasi-contractive condition of $T$  and~\eqref{139-95} that

\begin{eqnarray} \label{139-92} d(T ^ nx,T ^ mx) &=& d (T T ^{n-1}x, T ^{m- n+1}T ^{ n-1}x)\\ \nonumber
&\le &
q  \delta \big[O_T(T ^{ n-1}x,m-n+1) \big] \\ \nonumber
&=&
q  d (T ^{n-1}x, T ^{k_{m-n+1}(T ^{ n-1}x)}T ^{ n-1}x)\\ \nonumber
&=&
q  d (TT ^{n-2}x, T ^{k_{m-n+1}(T ^{ n-1}x)+1}T ^{ n-2}x)\\ \nonumber
&\le &
q ^2   \delta \big[O_T(T ^{n-2}x,k_{m-n+1}(T ^{ n-1}x)+1) \big] \\ \nonumber
&\le&
q ^2  \delta \big[O_T(T ^{ n-2}x,m-n+2) \big] \\ \nonumber
&\le &
\ldots \\ \nonumber
&\le & q ^n \delta \big[O_T(x,m) \big] \\ \nonumber
&\le &
\cfrac{ q ^ n}{1 -q} d(x,Tx).
\end{eqnarray}

Since $\lim\limits_{n \rightarrow \infty} q ^ n = 0$, $\{T ^ nx \}$ is a Cauchy sequence in $X$. Since $X$ is $T$-orbitally complete, there exists $x^* \in X$ such that
\begin{equation} \label{139-91} \lim\limits_{n \rightarrow \infty} T ^ nx = x ^*.
\end{equation}

By using the generalized quasi-contractive condition of $T$  again, we have
\begin{eqnarray} \label{139-87} &&d(x ^* ,T x ^* )\\ \nonumber
& \le & d(x ^*, T ^{ n+1}x) + d(T ^{ n+1}x, Tx ^* )\\ \nonumber
&= & d(x ^*, T ^{ n+1}x ) + d(TT ^ nx,Tx ^*)\\ \nonumber
&\le &
d(x ^*, T ^{ n+1}x ) + q \max \big\{d(T ^ nx,x ^*), d(T ^ nx,TT ^ nx), d(x ^*,Tx ^*),
d(T ^ nx, Tx ^*), \\ \nonumber
&&
d(x ^*,TT ^ nx),  d(T ^2 T ^nx,T ^ nx), d(T ^2 T ^ nx,TT ^ nx), d(T ^2 T ^ nx,x ^*), d(T ^2 T ^ nx,Tx ^*) \big\}\\ \nonumber
&= &
d(x ^*, T ^{ n+1}x ) + q \max \big\{d(T ^ nx,x ^*), d(T ^ nx,T ^{ n+1}x), d(x ^*,Tx ^*),
d(T ^ nx, Tx ^*), \\ \nonumber
&&
d(x ^*,T ^{n+1}x),  d(T ^{n+2}x,T ^ nx), d(T ^{ n+2}x,T ^{ n+1}x), d(T ^{ n+2}x,x ^*), d(T ^{n+2}x,Tx ^*) \big\}.
\end{eqnarray}
Taking the limit as $n \rightarrow \infty $ in~\eqref{139-87}, and using~\eqref{139-91}, we get $ d(x ^*,Tx ^*) \le q d(x ^*,Tx ^*) .
$
Since $q \in [0,1)$, we obtain $d(x ^*,Tx ^*) = 0$, that is, $x ^* =Tx ^*$. Then $T$ has a fixed~point.

\textbf{Step 2.} \emph{The fixed point of
$T$ is unique.}
Let $x ^*, y ^*$ be two fixed points of $T$. Since  $T$ is generalized quasi-contraction, we have
\begin{eqnarray*} \label{139-93} d(x ^*,y ^*)
&=&
d(Tx ^*,Ty^*) \\
&\le& q  \max \big\{d(x^*,y^*), d(x^*,Tx^*), d(y^*,Ty^*), d(x^*,Ty^*), d(y^*,Tx^*),\\
&&d(T ^2 x ^*, x ^*), d(T ^2 x^*,Tx^*),d(T ^2 x^*,y^*), d(T ^2 x^*,Ty^*) \big\}\\
&=&
qd(x ^*,y ^*).
\end{eqnarray*}
Since $q \in [0,1)$, we obtain $d(x ^*,y ^*) =0$.
That is, $x ^* = y ^*$. Then the fixed point of $T$ is unique.

\eqref{139-99-4}. It is proved by~\eqref{139-91}.

\eqref{139-99-5}. Taking the limit as $m \rightarrow \infty$ in~\eqref{139-92}, we get $d( T ^ nx,x ^* ) \le \cfrac{ q ^ n}{1 - q}d(x,Tx).$
\end{proof}

\begin{cor} \label{19-99} Let $(X,d)$ be a metric space and $T: X \longrightarrow X$ be a map satisfying the following:
\begin{enumerate} \item \label{19-99-1} $X$ is $T$-orbitally complete.

\item \label{19-99-2} There exists $k \in \mathbb{N}$ and $q \in [0,1)$ such that for all $x, y \in X$,
\begin{eqnarray} \label{19-98} d(T^kx,T^ky)
&\le& q  \max \big\{d(x,y), d(x,T^kx), d(y,T^ky), d(x,T^ky), d(y,T^kx),\\ \nonumber
&&d(T ^{2k} x,x), d(T ^{2k} x,T^kx) ,d(T ^{2k} x,y), d(T ^{2k}x, T^ky) \big\}.
\end{eqnarray}
\end{enumerate}
Then we have
\begin{enumerate} \item \label{19-99-3} $T$ has a unique fixed point $ x ^*$ in $X$.

\item \label{19-99-5} $ d( T ^ nx,x ^* ) \le \cfrac{ q ^ m}{1 - q} \max \big\{d(T ^ ix,T ^{ i+k}x): i =0,1, \ldots ,k-1 \big\} $ for all $x \in X$ and $n \in \mathbb{N}$ where $m$ is the greatest integer not exceeding $\cfrac{ n}{k}$.

\item \label{19-99-4} $\lim\limits_{n \rightarrow \infty} T ^nx = x ^*$ for all $x \in X$.
\end{enumerate}
\end{cor}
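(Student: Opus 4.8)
The plan is to apply Theorem~\ref{139-99} to the self-map $S:=T^{k}$ and then transfer its conclusions back to $T$. First I would note that hypothesis~\eqref{19-98} says precisely that $S$ is a generalized quasi-contraction in the sense of Definition~\ref{sgqc}: since $T^{2k}x=(T^{k})^{2}x=S^{2}x$, the nine quantities on the right-hand side of~\eqref{19-98} are exactly $d(x,y),\ d(x,Sx),\ d(y,Sy),\ d(x,Sy),\ d(y,Sx),\ d(S^{2}x,x),\ d(S^{2}x,Sx),\ d(S^{2}x,y),\ d(S^{2}x,Sy)$. Next I would check that $X$ is $S$-orbitally complete: the orbit $O_{S}(x,+\infty)=\{x,T^{k}x,T^{2k}x,\dots\}$ is a subset of $O_{T}(x,+\infty)$, so every Cauchy sequence contained in an orbit of $S$ is a Cauchy sequence in $O_{T}(x,+\infty)$, hence convergent in $X$ by~\eqref{19-99-1}. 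Therefore Theorem~\ref{139-99} applies to $S$ and produces a unique $x^{*}\in X$ with $Sx^{*}=x^{*}$, the convergence $\lim_{m\to\infty}S^{m}x=x^{*}$ for every $x\in X$, and the estimate $d(S^{m}x,x^{*})\le\frac{q^{m}}{1-q}\,d(x,Sx)$ for all $x\in X$ and $m\in\mathbb{N}$.

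To obtain~\eqref{19-99-3} I would use the standard commutation argument. Since $T$ and $S=T^{k}$ commute, $S(Tx^{*})=T^{k}(Tx^{*})=T(T^{k}x^{*})=T(Sx^{*})=Tx^{*}$, so $Tx^{*}$ is a fixed point of $S$; by uniqueness of the fixed point of $S$ we conclude $Tx^{*}=x^{*}$, i.e.\ $x^{*}$ is a fixed point of $T$. Conversely, any fixed point of $T$ is a fixed point of $S=T^{k}$, so it must coincide with $x^{*}$; hence $x^{*}$ is the unique fixed point of $T$.

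For~\eqref{19-99-5} and~\eqref{19-99-4}, given $n\in\mathbb{N}$ write $n=mk+r$ with $m$ the greatest integer not exceeding $n/k$ and $0\le r\le k-1$. Then $T^{n}x=S^{m}(T^{r}x)$, and applying the estimate for $S$ at the point $T^{r}x$ gives
\begin{align*}
d(T^{n}x,x^{*})=d\big(S^{m}(T^{r}x),x^{*}\big) &\le \frac{q^{m}}{1-q}\,d\big(T^{r}x,S(T^{r}x)\big)=\frac{q^{m}}{1-q}\,d\big(T^{r}x,T^{r+k}x\big)\\
&\le \frac{q^{m}}{1-q}\max\big\{d(T^{i}x,T^{i+k}x):i=0,1,\dots,k-1\big\},
\end{align*}
which is~\eqref{19-99-5}. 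Since $n\to\infty$ forces $m\to\infty$ and hence $q^{m}\to0$, the right-hand side tends to $0$, giving $\lim_{n\to\infty}T^{n}x=x^{*}$, which is~\eqref{19-99-4}.

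The only mildly delicate points are the bookkeeping identity $T^{2k}=S^{2}$, needed to recognize~\eqref{19-98} as the generalized quasi-contraction condition for $S$, and the verification that $S$-orbital completeness follows from $T$-orbital completeness; once these are settled, everything else is a routine transfer via the substitution $n=mk+r$, so I do not anticipate any serious obstacle.
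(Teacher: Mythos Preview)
Your proposal is correct and follows essentially the same route as the paper: apply Theorem~\ref{139-99} to $S=T^{k}$, use commutation to pass the fixed point from $S$ to $T$, and derive the estimate by writing $n=mk+r$ and invoking Theorem~\ref{139-99}\eqref{139-99-5} at the point $T^{r}x$. If anything, you are slightly more careful than the paper, since you explicitly verify that $T$-orbital completeness implies $S$-orbital completeness, a point the paper uses tacitly.
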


\begin{proof} \eqref{19-99-3}. By the conclusion of Theorem~\ref{139-99}, $ T ^ k$ has a unique fixed point $x ^*$ and $T ^ k(Tx ^*) = T (T ^ kx ^*) = Tx ^*$. It implies that $Tx ^* =x ^*$, that is, $T$ has a fixed point $x ^*$.
The uniqueness of the fixed point of $T$ is easy to see.

\eqref{19-99-5}. Let $n \in \mathbb{N}$. Then $n = mk +j$, $0 \le j <k$ and for each $x \in X$, $T ^ nx = (T ^ k)^mT ^ jx$. It follows from Theorem~\ref{139-99}.\eqref{139-99-5} that
\begin{eqnarray*} d(T ^ nx,x^*) & \le & \cfrac{ q ^ m}{1 -q}d(T ^ jx,T ^ kT ^ jx)\\
& \le &
\cfrac{ q ^ m}{1 -q} \max \big\{d(T ^ ix,T ^{ i+k}x): i =0,1, \ldots ,k-1 \big\} .
\end{eqnarray*}

\eqref{19-99-4}. It is a direct consequence of  \eqref{19-99-5}.
\end{proof}


\begin{cor}[\cite{LBC1974}, Theorem~2] \label{19-93} Let $(X,d)$ be a metric space and $T: X \longrightarrow X$ be a map satisfying the following:
\begin{enumerate} \item \label{19-93-1} $X$ is $T$-orbitally complete.

\item \label{19-93-2} There exists $k \in \mathbb{N}$ and $q \in [0,1)$ such that for all $x, y \in X$,
\begin{eqnarray} \label{19-92} d(T^kx,T^ky)
&\le& q  \max \big\{d(x,y), d(x,T^kx), d(y,T^ky), d(x,T^ky), d(y,T^kx) \big\}.
\end{eqnarray}
\end{enumerate}
Then we have
\begin{enumerate} \item \label{19-93-3} $T$ has a unique fixed point $ x ^*$ in $X$;

\item \label{19-93-5} $ d( T ^ nx,x ^* ) \le \cfrac{ q ^ m}{1 - q} \max \big\{d(T ^ ix,T ^{ i+k}x): i =0,1, \ldots ,k-1 \big\} $ for all $x \in X$ and $n \in \mathbb{N}$ where $m$ is the greatest integer not exceeding $\cfrac{ n}{k}$;

\item \label{19-93-4} $\lim\limits_{n \rightarrow \infty} T ^nx = x ^*$ for all $x \in X$.
\end{enumerate}
\end{cor}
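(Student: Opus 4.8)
The plan is to obtain this statement as an immediate specialization of Corollary~\ref{19-99}: condition \eqref{19-92} is just condition \eqref{19-98} with the four new quantities $d(T^{2k}x,x)$, $d(T^{2k}x,T^kx)$, $d(T^{2k}x,y)$, $d(T^{2k}x,T^ky)$ deleted from the maximum, so no new work beyond a monotonicity remark should be needed.

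First I would record the elementary observation that \eqref{19-92} implies \eqref{19-98} with the same $k$ and the same $q$. Indeed, the maximum on the right-hand side of \eqref{19-98} ranges over nine nonnegative numbers whose first five members are exactly the numbers $d(x,y)$, $d(x,T^kx)$, $d(y,T^ky)$, $d(x,T^ky)$, $d(y,T^kx)$ occurring in \eqref{19-92}. Since enlarging a set of nonnegative reals never decreases its maximum, the right-hand side of \eqref{19-98} is $\ge$ the right-hand side of \eqref{19-92}; hence any $T$ obeying \eqref{19-92} obeys \eqref{19-98}.

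Next I would check that the hypotheses of Corollary~\ref{19-99} are met: \eqref{19-93-1} coincides with \eqref{19-99-1}, and by the previous step \eqref{19-93-2} yields \eqref{19-99-2}. Applying Corollary~\ref{19-99} to $T$ then gives its conclusions \eqref{19-99-3}, \eqref{19-99-5}, \eqref{19-99-4}, which are verbatim the asserted \eqref{19-93-3}, \eqref{19-93-5}, \eqref{19-93-4}, completing the proof.

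I do not anticipate any real obstacle: the only nontrivial point is the monotonicity of the maximum used in the first step, and the rest is bookkeeping. It is worth noting, however, that this corollary is exactly \cite[Theorem~2]{LBC1974}, so the chain Theorem~\ref{139-99} $\Rightarrow$ Corollary~\ref{19-99} $\Rightarrow$ Corollary~\ref{19-93} displays \'Ciri\'c's iterated quasi-contraction theorem as a genuine particular case of our results, and the further choice $k=1$ recovers Theorem~\ref{139-65}.
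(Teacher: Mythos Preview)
Your proposal is correct and matches the paper's approach: the paper gives no explicit proof of Corollary~\ref{19-93}, placing it immediately after Corollary~\ref{19-99} as an evident special case, exactly via the monotonicity-of-the-maximum observation you spell out. There is nothing to add.
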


Now, we denote the multi-valued mapping $F: X \longrightarrow BN(X)$ of generalized quasi-contraction by
\begin{eqnarray} \label{19-96} \rho(Fx,Fy)
&\le& q  \max \big\{d(x,y), \rho(x,Fx), \rho(y,Fy), D(x,Fy), D(y,Fx),\\ \nonumber
&&D(F^2 x,x), D(F ^2 x,Fx) ,D(F ^2 x,y), D(F ^2x, Fy) \big\},
\end{eqnarray}
for some $q \in [0,1)$ and for all $x, y \in X$. The following theorem presents the fixed point theorem for multi-valued version of generalized quasi-contractive mapping.

\begin{thm} \label{19-97} Let $(X,d)$ be a metric space and $F: X \longrightarrow BN(X)$ be a multi-valued map. Suppose that $F$ is a generalized quasi-contraction
and $X$ is $F$-orbitally complete.
Then we have
\begin{enumerate} \item \label{19-97-3} $F$ has a unique fixed point $ x ^*$ in $X$ and $F x^* = \{ x^*\}$.
\item \label{19-97-4} For each $x_0 \in X$, there exists an orbit $\{ x_n\}_n$ of $F$ at $x_0$ such that  $\lim\limits_{n \rightarrow \infty} x_n = x ^*$ for all $x \in X$, and
\item \label{19-97-5} $ d( x_n,x ^* ) \le \cfrac{ (q ^{ 1-a}) ^ n}{1 - q ^{ 1-a}} d(x_0,x_1)$ for all $n \in \mathbb{N}$, where $a<1$ is any fixed positive number.
\end{enumerate}
\end{thm}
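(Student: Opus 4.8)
Here is how I would attack Theorem~\ref{19-97}.

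The plan is to mimic the proof of Theorem~\ref{139-99} as far as possible, with one modification forced by the multi-valued setting: since~\eqref{19-96} carries the sup-distance $\rho$ on its left side while the terms $\rho(x,Fx)$ occur on the right, a single orbit point cannot control $\rho(x_n,Fx_n)$, so the orbit must be chosen so as to nearly realize that supremum. Fix $a\in(0,1)$ as in the statement (and assume $q>0$, the case $q=0$ being immediate). Construct an orbit $\{x_n\}$ of $F$ at $x_0$ as follows: if $\rho(x_n,Fx_n)=0$, then $Fx_n=\{x_n\}$ and we put $x_{n+1}:=x_n$; otherwise, since $q^{a}<1$ and $\rho(x_n,Fx_n)=\sup\{d(x_n,u):u\in Fx_n\}$, choose $x_{n+1}\in Fx_n$ with $d(x_n,x_{n+1})>q^{a}\rho(x_n,Fx_n)$. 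In all cases $x_{n+1}\in Fx_n$ and $\rho(x_n,Fx_n)\le q^{-a}d(x_n,x_{n+1})$; put $\beta:=q^{1-a}$, and observe $\beta\in(q,1)$.

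\emph{Step 1: the diameter of a finite piece of the orbit.} For $1\le i<j\le N$ I bound $d(x_i,x_j)\le\rho(Fx_{i-1},Fx_{j-1})$ and expand the right-hand side by~\eqref{19-96}. Writing $\delta_N:=\delta[\{x_0,\ldots,x_N\}]$, each of the nine terms is $\le\delta_N$ or $\le q^{-a}\delta_N$: the term $d(x_{i-1},x_{j-1})$ is trivial; $D(x_{i-1},Fx_{j-1})\le d(x_{i-1},x_j)$ and $D(x_{j-1},Fx_{i-1})\le d(x_{j-1},x_i)$ use the orbit points $x_j\in Fx_{j-1}$, $x_i\in Fx_{i-1}$; $\rho(x_{i-1},Fx_{i-1})\le q^{-a}d(x_{i-1},x_i)$ and $\rho(x_{j-1},Fx_{j-1})\le q^{-a}d(x_{j-1},x_j)$ come from the construction; and for the four new terms one uses $Fx_i\subseteq F^2x_{i-1}$, so that $x_{i+1}\in F^2x_{i-1}$ and $D(F^2x_{i-1},x_{i-1})\le d(x_{i+1},x_{i-1})$, $D(F^2x_{i-1},Fx_{i-1})\le d(x_{i+1},x_i)$, $D(F^2x_{i-1},x_{j-1})\le d(x_{i+1},x_{j-1})$, $D(F^2x_{i-1},Fx_{j-1})\le d(x_{i+1},x_j)$, all $\le\delta_N$ since $i+1\le N$. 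Hence $d(x_i,x_j)\le q\,q^{-a}\delta_N=\beta\,\delta_N$ for all $1\le i<j\le N$. As $\beta<1$, the maximum defining $\delta_N$ is not attained at any pair of indices both $\ge1$, so $\delta_N=d(x_0,x_s)$ for some $1\le s\le N$, and $d(x_0,x_s)\le d(x_0,x_1)+d(x_1,x_s)\le d(x_0,x_1)+\beta\delta_N$ gives $\delta_N\le\frac1{1-\beta}d(x_0,x_1)$ for every $N$. The same reasoning applied to the shifted orbit $\{x_k,x_{k+1},\ldots\}$, which satisfies the same construction property, shows that $\delta[\{x_k,\ldots,x_N\}]\le\frac1{1-\beta}d(x_k,x_{k+1})$ and moreover that each such diameter is attained at a pair containing $x_k$.

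\emph{Step 2: Cauchyness, convergence, rate, and the fixed point.} Exactly as in~\eqref{139-92}, I iterate: $d(x_n,x_m)\le\beta\,\delta[\{x_{n-1},\ldots,x_m\}]$ (expand by~\eqref{19-96} and bound the nine terms as in Step~1); by Step~1 the diameter on the right equals $d(x_{n-1},x_{t})$ for some $n\le t\le m$, so $d(x_{n-1},x_t)\le\beta\,\delta[\{x_{n-2},\ldots,x_t\}]$, and continuing $n$ times yields $d(x_n,x_m)\le\beta^{n}\,\delta[\{x_0,\ldots,x_m\}]\le\frac{\beta^{n}}{1-\beta}d(x_0,x_1)$ for all $m>n$. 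Since $\beta=q^{1-a}<1$, $\{x_n\}$ is Cauchy, hence converges to some $x^*$ by $F$-orbital completeness, proving~\eqref{19-97-4}; letting $m\to\infty$ gives the estimate~\eqref{19-97-5}. To get $Fx^*=\{x^*\}$, bound $\rho(x^*,Fx^*)\le d(x^*,x_{n+1})+\rho(Fx_n,Fx^*)$, expand $\rho(Fx_n,Fx^*)$ by~\eqref{19-96}, and let $n\to\infty$: using the orbit points $x_{n+1},x_{n+2}$, the inclusion $x_{n+2}\in F^2x_n$, and $\rho(x_n,Fx_n)\le q^{-a}d(x_n,x_{n+1})\to0$, every term tends to $0$ except those dominated by $\rho(x^*,Fx^*)$, so $\rho(x^*,Fx^*)\le q\,\rho(x^*,Fx^*)$, whence $\rho(x^*,Fx^*)=0$, i.e.\ $Fx^*=\{x^*\}$. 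Finally, if $y^*\in Fy^*$ then $\rho(y^*,Fy^*)\le\rho(Fy^*,Fy^*)\le q\,\rho(y^*,Fy^*)$ (the remaining terms of~\eqref{19-96} vanish since $D(y^*,Fy^*)=0$ and $Fy^*\subseteq F^2y^*$), so $Fy^*=\{y^*\}$; then $d(x^*,y^*)=\rho(Fx^*,Fy^*)\le q\,d(x^*,y^*)$, because with $Fx^*=\{x^*\}$ and $Fy^*=\{y^*\}$ every term on the right of~\eqref{19-96} equals $d(x^*,y^*)$ or $0$, forcing $x^*=y^*$ and proving~\eqref{19-97-3}.

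The main obstacle I expect is Step~1, where two things must be arranged simultaneously: one must see that the four new terms $D(F^2x,x),\ldots,D(F^2x,Fy)$ are exactly the ones that stay under control, via $Fx_i\subseteq F^2x_{i-1}$ and the orbit point $x_{i+1}\in F^2x_{i-1}$; and one must build the orbit so that the unavoidable supremum terms $\rho(x_\ell,Fx_\ell)$ are paid for by only a factor $q^{-a}$. This interplay is precisely what degrades the contraction constant from $q$ (in the single-valued Theorem~\ref{139-99}) to $q^{1-a}$, which is why the fixed parameter $a$ appears in~\eqref{19-97-5}.
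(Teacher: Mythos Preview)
Your argument is correct and rests on the same key device as the paper's: pick orbit points so that $d(x_n,x_{n+1})\ge q^{a}\rho(x_n,Fx_n)$, which degrades the contraction constant from $q$ to $q^{1-a}$ and lets all nine terms of~\eqref{19-96} be absorbed into the orbit diameter. The difference is purely in packaging. The paper does not redo the diameter and Cauchy estimates; instead it defines a \emph{global} single-valued selection $T:X\to X$ by choosing, for every $x\in X$, a point $Tx\in Fx$ with $d(x,Tx)\ge q^{a}\rho(x,Fx)$, checks in one line (using $Tx\in Fx$, $T^2x\in F^2x$, and $q^{a}\rho(\cdot,F\cdot)\le d(\cdot,T\cdot)$) that $T$ satisfies Definition~\ref{sgqc} with constant $q^{1-a}$, and then invokes Theorem~\ref{139-99} as a black box, setting $x_n=T^n x_0$. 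Your inline route costs more writing but is self-contained and, on the uniqueness claim for $F$, is actually more explicit than the paper, which only records uniqueness for $T$.
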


\begin{proof} \eqref{19-97-3}. Given $a \in (0,1)$ and defined a single-valued mapping $T: X \longrightarrow X$ by the following statement:
$$ for~ each~ x \in X,  ~Tx \in  Fx ~ ~ satisfies ~~d(x,Tx) \ge q ^ a \rho (x,Fx).$$
 By the Definition \ref{19-94} and the condition of $F$, we have for every $x,y \in X$,
\begin{eqnarray*} d(Tx,Ty) & \le & \rho(Fx,Fy) \\
& \le &
q\max \big\{d(x,y), \rho(x,Fx), \rho(y,Fy), D(x,Fy), D(y,Fx),\\ \nonumber
&&D(F^2 x,x), D(F ^2 x,Fx) ,D(F ^2 x,y), D(F ^2x, Fy) \big\}\\
&= &
qq ^{ -a}\max \big\{q ^ad(x,y), q ^a \rho(x,Fx), q ^a\rho(y,Fy), q ^aD(x,Fy), q ^aD(y,Fx),\\ \nonumber
&&q ^aD(F^2 x,x), q ^aD(F ^2 x,Fx) ,q ^aD(F ^2 x,y), q ^aD(F ^2x, Fy) \big\}\\
&\le &
q^{1 -a}\max \big\{d(x,y), d(x,Tx), d(y,Ty), d(x,Ty), d(y,Tx),\\ \nonumber
&&d(T^2 x,x), d(T^2 x,Tx) ,d(T^2 x,y), d(T^2x, Ty) \big\}.
\end{eqnarray*}
By Theorem~\ref{139-99}, we conclude that $T$ has a unique fixed point $x ^*$. Then $\rho(x ^*,F x^*) \le q ^ a d(x^*,T x^*) = 0$ implies that $\rho(x^*, F x^*) = 0$. Then $x^*$ is a fixed point of $F$ and $F x^* = \{x^* \}$.
From the direct consequences of Theorem~\ref{139-99} where $x_n = T ^ nx$ for all $n \in \mathbb{N}$, we obtain that
\eqref{19-97-4} and \eqref{19-97-5} hold.
\end{proof}

\begin{cor}[\cite{LBC1974}, Theorem~3] \label{19-91} Let $(X,d)$ be a metric space and $F: X \longrightarrow BN(X)$ be a multi-valued map satisfying the following:
\begin{enumerate} \item \label{19-91-1} $X$ is $F$-orbitally complete.

\item \label{19-91-2} There exists $q \in [0,1)$ such that for all $x, y \in X$,
\begin{eqnarray} \label{19-90} \rho(Fx,Fy)
&\le& q  \max \big\{d(x,y), \rho(x,Fx), \rho(y,Fy), D(x,Fy), D(y,Fx) \big\}.
\end{eqnarray}
\end{enumerate}
Then we have
\begin{enumerate} \item \label{19-91-3} $F$ has a unique fixed point $ x ^*$ in $X$ and $F x^* = \{ x^*\}$.

\item \label{19-91-4} For each $x_0 \in X$, there exists an orbit $\{ x_n\}_n$ of $F$ at $x_0$ such that  $\lim\limits_{n \rightarrow \infty} x_n = x ^*$ for all $x \in X$, and

\item \label{19-91-5} $ d( x_n,x ^* ) \le \cfrac{ (q ^{ 1-a}) ^ n}{1 - q ^{ 1-a}} d(x_0,x_1)$ for all $n \in \mathbb{N}$, where $a<1$ is any fixed positive number.
\end{enumerate}
\end{cor}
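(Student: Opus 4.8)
The plan is to deduce this corollary immediately from Theorem~\ref{19-97}, so the only thing that really needs to be checked is that a multi-valued map $F$ satisfying~\eqref{19-90} automatically satisfies the generalized quasi-contractive inequality~\eqref{19-96}. First I would observe that the five quantities appearing in the maximum on the right-hand side of~\eqref{19-90}, namely $d(x,y)$, $\rho(x,Fx)$, $\rho(y,Fy)$, $D(x,Fy)$ and $D(y,Fx)$, are precisely five of the nine quantities appearing in the maximum on the right-hand side of~\eqref{19-96}; the four additional terms $D(F^2x,x)$, $D(F^2x,Fx)$, $D(F^2x,y)$ and $D(F^2x,Fy)$ are nonnegative, so enlarging the finite set over which the maximum is taken can only increase its value. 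Consequently, for all $x,y \in X$,
\begin{eqnarray*}
\rho(Fx,Fy) &\le& q\max\big\{d(x,y), \rho(x,Fx), \rho(y,Fy), D(x,Fy), D(y,Fx)\big\}\\
&\le& q\max\big\{d(x,y), \rho(x,Fx), \rho(y,Fy), D(x,Fy), D(y,Fx),\\
&& D(F^2x,x), D(F^2x,Fx), D(F^2x,y), D(F^2x,Fy)\big\},
\end{eqnarray*}
with the same constant $q\in[0,1)$, which is exactly~\eqref{19-96}. Hence $F$ is a multi-valued generalized quasi-contraction in the sense used for Theorem~\ref{19-97}.

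Next I would note that hypothesis~\eqref{19-91-1}, that $X$ be $F$-orbitally complete, is literally the corresponding hypothesis of Theorem~\ref{19-97}, so no further preparation is needed. Applying Theorem~\ref{19-97} then yields all three conclusions simultaneously: $F$ has a unique fixed point $x^*$ with $Fx^* = \{x^*\}$, which is~\eqref{19-91-3}; for every $x_0\in X$ there is an orbit $\{x_n\}_n$ of $F$ at $x_0$ with $\lim_{n\to\infty}x_n = x^*$, which is~\eqref{19-91-4}; and the estimate $d(x_n,x^*)\le \cfrac{(q^{1-a})^n}{1-q^{1-a}}\,d(x_0,x_1)$ holds for every fixed $a\in(0,1)$, which is~\eqref{19-91-5}.

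I do not expect any genuine obstacle here: the corollary is a pure specialization obtained by discarding the four added $F^2$-terms from the new contractive condition, and the monotonicity of the maximum over a larger index set does all the work. The only point that requires a moment's attention is the purely clerical one of matching the five terms of the classical \'Ciri\'c condition termwise with five of the nine terms of~\eqref{19-96}, taking care that the $\rho$-terms and the $D$-terms correspond correctly; once that identification is made, the chain of inequalities above is immediate and the proof concludes by invoking Theorem~\ref{19-97}.
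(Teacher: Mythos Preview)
Your argument is correct and is exactly the intended one: the paper states this corollary without proof, treating it as the immediate specialization of Theorem~\ref{19-97} obtained by observing that the five-term maximum in~\eqref{19-90} is dominated by the nine-term maximum in~\eqref{19-96}. Nothing further is needed.
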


\begin{exam} \label{19-95} Let $(X,d)$ and $T: X \longrightarrow X$ be defined by Example \ref{ex}.

It is easy to see that  $X$ is $T$-orbitally complete metric space. By the definition of the distance  $d$ and mapping $T$ , we conclude that $X$ and $T$ satisfy all of the conditions in Theorem \ref{139-99}.  Clearly, $x^{*} = 1$
is a unique fixed point of $T$.

Note that, if $x \in \{1,2,3 \}$ then $T^{n}x = 1 $ for $ n= 1,2,3,...$ and
if $x \in \{4,5 \}$ then $T^{n}x = 1 $ for $ n= 2,3,4,...$. That is $\lim\limits_{n \rightarrow \infty} T^{n}x  = x ^*$ for all $x \in X$.

Let $q \in \big[ 0.5,1 \big)$ be fixed by the generalized quasi-contraction of $T$ which arises from Example \ref{ex}. We see that the inequality $ d( T ^ nx,x ^* ) \le \cfrac{ q ^ n}{1 - q} d(x,Tx) $ holds for all $x \in X$ and $n \in \mathbb{N}$.

Therefore, this example is presented to certify the results of  Theorem~\ref{139-99}. However, it is not  applicable to Theorem \ref{139-65}.
\end{exam}

\begin{rem} Example~\ref{19-95} shows that our results are proper generalizations of \'Ciri\'c fixed point theorems in~\cite{LBC1974}. Then our results are exactly a new form of fixed point theorems in metric spaces. Moreover, we may generalize other fixed point theorems contained at most five mentioned values in the literature to that contain $d(T ^2 x,x)$, $d(T ^2 ,Tx)$, $d(T ^2 x,y)$, $d(T ^2 x,Ty)$ in addition.
\end{rem}

\noindent{\underline{\bf Acknowledgement.}}
This research of is supported by King Mongkut's University
of Technology North Bangkok, Thailand


\end{document}